\newcommand{\tc}{} 
\newcommand{\E}{\mathbb E}
\newcommand{\R}{\mathbb{R}}
\renewcommand{\P}{\mathbb{P}}
\theoremstyle{plain}
\newtheorem{theorem}{Theorem}[section]
\newtheorem{corollary}[theorem]{Corollary}
\theoremstyle{definition}
\theoremstyle{remark}
\begin{document}

\author{Friedrich G\"otze}
\address{Friedrich G\"otze, Faculty of Mathematics,
Bielefeld University,
P. O. Box 10 01 31,
33501 Bielefeld, Germany}
\email{goetze@math.uni-bielefeld.de}
\author{Dmitry Zaporozhets}
\address{Dmitry Zaporozhets\\
St.\ Petersburg Department of
Steklov Institute of Mathematics,
Fontanka~27,
 191011 St.\ Petersburg,
Russia}
\email{zap1979@gmail.com}

\title{Discriminant and root separation of integral polynomials}
\keywords{distribution of discriminants, integral polynomials,  polynomial discriminant, polynomial root separation}
\subjclass[2010]{11C08}
\thanks{The work was done with the financial support of the Bielefeld University (Germany) in terms of project SFB 701.}

\begin{abstract}

Consider a random polynomial
$$
G_Q(x)=\xi_{Q,n}x^n+\xi_{Q,n-1}x^{n-1}+\dots+\xi_{Q,0}
$$
with independent coefficients uniformly distributed on $2Q+1$ integer points $\{-Q, \dots, Q\}$. Denote by $D(G_Q)$ the discriminant of $G_Q$. We show that there exists a constant $C_n$, depending on $n$ only such that for all $Q\ge 2$ the
distribution of $D(G_Q)$ can be approximated as follows
$$
\sup_{-\infty\leq a\leq b\leq\infty}\left|\mathbb{P}\left(a\leq \frac{D(G_Q)}{Q^{2n-2}}\leq b\right)-\int_a^b\varphi_n(x)\, dx\right|\leq\frac{C_n}{\log Q},
$$
where $\varphi_n$  denotes the distribution function of the discriminant of a random polynomial of degree $n$ with independent coefficients which are uniformly distributed on $[-1,1]$.

Let  $\Delta(G_Q)$ denote the minimal distance between the complex roots of $G_Q$. As an application we show that for any $\varepsilon>0$ there exists a constant $\delta_n>0$ such that  $\Delta(G_Q)$ is stochastically bounded from below/above  for all sufficiently large $Q$ in the following sense
$$
\mathbb{P}\left(\delta_n<\Delta(G_Q)<\frac1{\delta_n}\right)>1-\varepsilon .
$$

\end{abstract}

\maketitle

\section{Introduction}

Let
$$
p(x)=a_nx^n+a_{n-1}x^{n-1}+\dots+a_0=a_n(x-\alpha_1)\dots(x-\alpha_n)
$$
be a polynomial of degree $n$ with real or complex coefficients.

In this note we consider different asymptotic estimates when the degree $n$ is arbitrary but \emph{fixed}. Thus for non-negative functions $f,g$ we write $f\ll g$ if there exists a non-negative constant $C_n$ (depending on $n$ only) such that $f\leq C_ng$. We also write $f\asymp g$ if $f\ll g$ and $f\gg g$.

Denote by
$$
\Delta(p)=\min_{1\leq i<j\leq n}|\alpha_i-\alpha_j|
$$
the shortest distance between any two zeros of $p$.

In his seminal paper \tc{Mahler \cite{kM64}} proved that
\begin{equation}\label{331}
\Delta(p)\geq\sqrt{3}n^{-(n+2)/2}\frac{|D(p)|^{1/2}}{\left(|a_n|+\dots+|a_0|\right)^{n-1}},
\end{equation}
where
\begin{equation}\label{308}
D(p)=a_n^{2n-2}\prod_{1\leq i<j\leq n}(\alpha_i-\alpha_j)^2
\end{equation}
 \tc{denotes the} discriminant of $p(x)$. Alternatively, $D(p)$ \tc{is} given by the $(2n-1)$-dimensional determinant
\begin{multline}\label{932}
D(p)=(-1)^{n(n-1)/2}
\\\times
\left|\begin{matrix}
 & 1 & a_{n-1} &a_{n-2} & \ldots &  a_0 & 0 & \ldots &0&0 \\
 & 0 & a_n & a_{n-1} & \ldots &  a_1 & a_0 & \ldots &0&0 \\
 &\vdots&\vdots&\vdots&\ddots&\vdots&\vdots&\ddots&\vdots&\vdots\\
 &0 &0 &0 &\dots &a_{n-2} &a_{n-3}&\dots&a_1 &a_0 \\
 &n &(n-1)a_{n-1} &(n-2)a_{n-2} &\dots &0 &0 &\dots &0&0\\
 &0&na_n &(n-1)a_{n-1}  &\dots &a_1 &0 &\dots &0&0\\
&\vdots&\vdots&\vdots&\ddots&\vdots&\vdots&\ddots&\vdots&\vdots\\
 &0 &0 &0 &\dots &(n-1)a_{n-1} &(n-2)a_{n-2}&\dots&2a_2 &a_1 \\
\end{matrix}\right|.
\end{multline}

Define the height of the polynomial by $H(p)=\max_{0\leq i\leq n}|a_i|$. It follows immediately from \eqref{932} that
\begin{equation}\label{311}
|D(p)|\ll H(p)^{2n-2}.
\end{equation}

From now on we will always assume that the polynomial $p$ is integral (that is has integer coefficients).  Since the condition $D(p)\ne0$ implies $|D(p)|\geq1$  Mahler noted that \eqref{331} implies \begin{equation}\label{445}
\Delta(p)\gg H(p)^{-n+1},
 \end{equation}
provided that $p$ doesn't have multiple zeros.
The estimate \eqref{445} \tc{seems to be the best available lower bound  up  to now.} However, for $n\geq 3$ it is still not known how far \tc{it differs from the optimal lower bound.} Denote by $\kappa_n$ the infimum of $\kappa$ such that
$$
\Delta(p)> H(p)^{-\kappa}
$$
holds for all integral polynomials of degree $n$ without multiple zeros  and large enough height $H(p)$. It is easy to see that \eqref{445} is equivalent to $\kappa_n\leq n-1$. Also it is a simple exercise to show that $\kappa_2=1$ (see, e.g., \cite{BM10}). Evertse~\cite{jE04} showed that $\kappa_3=2$.

For $n\geq 4$ only estimates are known. At first, Mignotte~\cite{mM83} proved that $\kappa_n\geq n/4$ for $n\geq 2$. Later Bugeaud and Mignotte~\cite{BM04,BM10} have shown that $\kappa_n\geq n/2$ for even $n\geq 4$ and $\kappa_n\geq (n+2)/4$ for odd $n\geq 5$. Shortly after that Beresnivich, Bernik, and G\"otze~\cite{BBG10}, using completely different approach,  improved their result in the case of odd $n$: they obtained (as a corollary of more general counting result) that $\kappa_n\geq(n+1)/3$ for $n\geq2$. Recently Bugeaud and Dujella~\cite{BD14} achieved significant progress showing that $\kappa_n\geq(2n-1)/3$ for $n\geq4$ (see also~\cite{BD11} for irreducible polynomials).

Formulated in other terms the above results give answers to the question {\emph "How close to each other can two conjugate algebraic numbers of degree $n$ be?" } Recall that two complex algebraic numbers called conjugate (over $\mathbb Q$) if they are roots of the same irreducible integral polynomial (over $\mathbb Q$). Roughly speaking, if we consider a polynomial $p^*$ which minimizes $\Delta(p)$ among all integral polynomials of degree $n$ having the same height and without multiple zeros, then $\Delta(p^*)$
satisfies the following lower/upper bounds with respect to $H(p^*)$:
$$
H(p^*)^{-c_1n}\ll \Delta(p^*)\ll H(p^*)^{-c_2n},
$$
for some absolute constants $0<c_2\leq c_1$.
In this note, instead of considering the extreme polynomial $p^*$, we consider the behaviour of $\Delta(p)$ for a typical integral polynomial $p$. We prove that for ''most`` integral polynomials (see Section~\ref{1049} for a more precise formulation) we have
$$
\Delta(p)\asymp 1.
$$
We also show that the same estimate holds for ''most`` irreducible  integral polynomials (over $\mathbb Q$).

\bigskip

A related interesting problem is to study the distribution of discriminants of integral polynomials. To deal with it is convenient (albeit not necessary) to use probabilistic terminology. Consider some $Q\in\mathbb N$ and consider the class of all integral polynomials $p$ with $\deg(p)\leq n$ and $H(p)\leq Q$. The cardinality of this class is $(2Q+1)^{n+1}$. Consider the uniform probability measure  on this class so that the probability  of each polynomial is given by $(2Q+1)^{-n-1}$. In this sense, we may consider random polynomials
$$
G_Q(x)=\xi_{Q,n}x^n+\xi_{Q,n-1}x^{n-1}+\dots+\xi_{Q,0}
$$
with independent coefficients which are uniformly distributed on $2Q+1$ integer points $\{-Q, \dots, Q\}$. We are interested in the asymptotic behavior of $D(G_Q)$ when $n$ is fixed and $Q\to\infty$.

Bernik, G\"otze and Kukso~\cite{BGK08} showed that for $\nu\in[0,1/2]$
$$
\P(|D(G_Q|<Q^{2n-2-2\nu})\gg Q^{-2\nu}.
$$
Note that the case $\nu=0$ is consistent with \eqref{311}. It has been conjectured in~\cite{BGK08} that this estimate is optimal up to a constant:
\begin{equation}\label{211}
\P(|D(G_Q)|<Q^{2n-2-2\nu})\asymp Q^{-2\nu}.
\end{equation}
The conjecture turned out to be true for $n=2$: G\"otze, Kaliada, and Korolev~\cite{GKK13} showed that for $n=2$ and $\nu\in (0,3/4)$ it holds
$$
\P(|D(G_Q)|<Q^{2-2\nu})=2(\log 2+1)Q^{-2\nu}\left(1+O(Q^{-\nu}\log Q+Q^{2\nu-3/2}\log^{3/2}Q)\right).
$$
However, for $n=3$ and $\nu\in [0,3/5)$ Kaliada, G\"otze, and Kukso~\cite{KGK13} obtained the following asymptotic relation:
\begin{equation}\label{1202}
\P(|D(G_Q)|<Q^{4-2\nu})=\kappa Q^{-5\nu/3}\left(1+O(Q^{-\nu/3}\log Q+Q^{5\nu/3-1})\right),
\end{equation}
where the absolute constant $\kappa$ had been explicitly determined.

Recently Beresnevich, Bernik, and G\"otze~\cite{BBG15} extended the lower bound given by~\eqref{1202} to the full range of $\nu$ and to the arbitrary degrees n. They showed that for $0\leq\nu< n-1$ one has that
$$
\P(|D(G_Q|<Q^{2n-2-2\nu})\gg Q^{-n+3-(n+2)\nu/n}.
$$
They also obtained a similar result for resultants.

In this note we prove a limit  theorem for $D(G_Q)$. As a corollary, we obtain that ''with high probability`` (see Section~\ref{1049} for details) the following asymptotic equivalence holds:
$$
|D(P_Q)|\asymp Q^{2n-2}.
$$
The same estimate holds ''with high probability`` for irreducible polynomials.

For more comprehensive survey of the subject and a list of references, see~\cite{BBGK13}.

\section{Main results}\label{1049}

Let $\xi_0,\xi_1,\dots,\xi_{n}$ be independent random variables {\it uniformly} distributed on $[-1,1]$. Consider the random polynomial
$$
G(x)=\xi_nx^n+\xi_{n-1}x^{n-1}+\dots+\xi_1x+\xi_0
$$
and denote by $\varphi$ the distribution function of $D(G)$. It is easy to see that $\varphi$ has  compact support and $\sup_{x\in\R}\varphi(x)<\infty$.

\begin{theorem}\label{147}
Using the above notations we have
\begin{equation}\label{1133}
\sup_{-\infty\leq a\leq b\leq\infty}\left|\P\left(a\leq \frac{D(G_Q)}{Q^{2n-2}}\leq b\right)-\int_a^b\varphi(x)\, dx\right|\ll\frac1{\log Q}.
\end{equation}
\end{theorem}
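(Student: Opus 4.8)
The plan is to exploit the exact scaling identity
\[
\frac{D(G_Q)}{Q^{2n-2}}=D(\eta_0,\dots,\eta_n),\qquad \eta_i:=\frac{\xi_{Q,i}}{Q}.
\]
Indeed, \eqref{308} (equivalently \eqref{932}) exhibits $D$ as a homogeneous polynomial of degree $2n-2$ in the coefficients, so $D(Q\tilde p)=Q^{2n-2}D(\tilde p)$. After this rescaling the $\eta_i$ are i.i.d.\ uniform on the grid $\Lambda_Q=\{k/Q:\ -Q\le k\le Q\}\subset[-1,1]$ of mesh $1/Q$. Thus \eqref{1133} becomes a comparison between the law of $D$ evaluated at $\Lambda_Q$-uniform coefficients and the law of $D$ at $[-1,1]$-uniform coefficients, i.e.\ the density $\varphi$.

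First I would couple the two families of coefficients. For each $i$ take $\zeta_i$ uniform on $[-1,1]$ and let $\eta_i$ be its image under the monotone (quantile) coupling of $\mathrm{Unif}[-1,1]$ with the uniform law on $\Lambda_Q$; since both measures live on $[-1,1]$ and the grid has mesh $1/Q$, this coupling satisfies $|\eta_i-\zeta_i|\le 1/Q$ surely, and the pairs are taken independent over $i$. Then $(\zeta_0,\dots,\zeta_n)$ realizes $G$ while $(\eta_0,\dots,\eta_n)$ realizes the rescaled $G_Q$, and $\|\eta-\zeta\|_\infty\le 1/Q$. Because $D$ is a fixed polynomial, its gradient is bounded on the cube $[-1,1]^{n+1}$, so $D$ is Lipschitz there and
\[
|D(\eta)-D(\zeta)|\ll \|\eta-\zeta\|_\infty\ll \frac1Q
\]
surely under the coupling.

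It remains to convert this deterministic closeness into a uniform distributional bound, and here the only nontrivial input is the stated $\sup_x\varphi(x)<\infty$ (so that $x\mapsto\int_{-\infty}^x\varphi$ is Lipschitz). Writing $\delta\ll 1/Q$ for the coupling error, for every $-\infty\le a\le b\le\infty$ one sandwiches
\[
\P\!\left(a+\delta\le D(\zeta)\le b-\delta\right)\le \P\!\left(a\le D(\eta)\le b\right)\le \P\!\left(a-\delta\le D(\zeta)\le b+\delta\right),
\]
and since $\P(a\le D(\zeta)\le b)=\int_a^b\varphi$, the outer terms differ from $\int_a^b\varphi$ by at most $2\delta\sup_x\varphi(x)\ll 1/Q$ (the degenerate, half-line, and whole-line cases being identical). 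Taking the supremum over $a\le b$ bounds the left-hand side of \eqref{1133} by $\ll 1/Q\ll 1/\log Q$.

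The conceptual point, and where the real work hides, is that the delicate arithmetic of discriminants of integral polynomials --- the possible small values of $|D(G_Q)|$ and the atom the discrete discriminant carries on the variety $\{D=0\}$ --- never needs to be analysed directly. Homogeneity trades the height $Q$ for the mesh $1/Q$, and the sandwich above controls every interval uniformly, including the degenerate $[0,0]$, for which it yields $\P(D(G_Q)=0)\le \P(|D(\zeta)|\le\delta)\ll 1/Q$ for free. Consequently the single genuinely nontrivial ingredient is the boundedness of the limiting density, and the main obstacle is purely analytic: controlling $\varphi$ near the critical values of $D$, in particular near $0$ where several root-coincidence strata meet. Granting $\sup_x\varphi(x)<\infty$, the method in fact delivers the sharper rate $O(1/Q)$, of which the asserted $O(1/\log Q)$ is a weaker form.
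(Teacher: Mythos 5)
Your argument is genuinely different from the paper's proof, which compares the moments $\E\,D^k(G_Q/Q)$ and $\E\,D^k(G)$ (getting a discrepancy $\gamma^k/Q$), deduces that the two characteristic functions differ by at most $\gamma|t|e^{\gamma|t|}/Q$, and then applies Esseen's smoothing inequality with cutoff $T=\log Q/(2\gamma)$ --- the exponential factor $e^{\gamma T}$ is precisely what forces the logarithmic rate. Your quantile coupling, the bound $\|\eta-\zeta\|_\infty\ll 1/Q$, the homogeneity identity $D(G_Q)/Q^{2n-2}=D(\eta)$, and the Lipschitz estimate for $D$ on $[-1,1]^{n+1}$ are all correct; they validly reduce the theorem to showing that the concentration function $\sup_x\P\left(x\le D(G)\le x+h\right)$ is small at scale $h\asymp1/Q$. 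The fatal step is the last one, where you convert this into $O(1/Q)$ by invoking $\sup_x\varphi(x)<\infty$.

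That hypothesis --- which you correctly single out as ``where the real work hides,'' but then assume rather than prove --- fails for $n\ge3$, and the conclusion $O(1/Q)$ is provably false. The paper notes immediately after Theorem~\ref{147} that \eqref{1202} forbids replacing $1/\log Q$ by $Q^{-\eps}$ for any $\eps>0$ when $n=3$. Concretely, \eqref{1202} with $\nu=1/2$ gives $\P\left(|D(G_Q)|<Q^{3}\right)\sim\kappa Q^{-5/6}$, whereas your sandwich would give $\P\left(|D(G_Q)|/Q^{4}<Q^{-1}\right)\le\P\left(|D(G)|<(1+L)/Q\right)\le 2(1+L)\sup\varphi\,/Q=O(1/Q)$, a contradiction for large $Q$. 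In fact, running your own (correct) coupling inequality against \eqref{1202} shows $\P\left(|D(G)|\le u\right)\gg u^{5/6}$ as $u\to0$ for $n=3$, i.e.\ $\varphi$ has a singularity of order $|t|^{-1/6}$ at the origin; one can already see this in the depressed cubic, where $D=-4p^3-27q^2$ and the convolution of the densities of $p^3$ and $q^2$ (singularities of orders $|t|^{-2/3}$ and $|t|^{-1/2}$) diverges at $0$. So the arithmetic subtlety you hoped to bypass reappears, in analytic disguise, exactly in the boundedness of $\varphi$: your method can give no more than $d_K\ll\omega(1/Q)$ with $\omega$ the concentration function of $D(G)$, and that is not $O(1/Q)$. (The same unproved assertion $\sup_x\varphi(x)<\infty$ is also quoted in the paper's Esseen step, so the issue is not yours alone; but your proposal leans on it at full strength, at scale $1/Q$, where it demonstrably breaks.)
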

How far is this estimate from being optimal? Relation \eqref{1202} shows that for $n=3$ the estimate $\log^{-1}Q$ can not be replaced by $Q^{-\varepsilon}$ for any $\varepsilon>0$. Otherwise it would imply that \eqref{211} holds for $\nu\leq\varepsilon/2$.

The proof of Theorem~\ref{147} will be given in Section~\ref{147}. Now let us derive some corollaries.

Relation~\eqref{311} means that $|D(G_Q)|\ll Q^{2n-2}$ holds a.s. It follows from Theorem~\ref{147} that with high probability the lower estimate holds as well.
\begin{corollary}\label{148}
For any $\varepsilon>0$ there exists $\delta>0$ (depending on $n$ only) such that for all sufficiently large $Q$
\begin{equation}\label{204}
\P(|D(G_Q)|>\delta Q^{2n-2})>1-\varepsilon.
\end{equation}
\end{corollary}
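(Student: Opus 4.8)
The plan is to read the corollary off directly from Theorem~\ref{147}, using the single qualitative fact that the limiting distribution places no mass at the origin. The only analytic input about $\varphi$ that I need is the already-stated boundedness $\sup_{x\in\R}\varphi(x)<\infty$: this rules out an atom at $0$ and, more to the point, forces the $\varphi$-mass of a small neighbourhood of $0$ to be uniformly small.

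First I would fix $\eps>0$ and choose $\delta>0$ so small that the limiting law assigns mass at most $\eps/2$ to $[-\delta,\delta]$. Writing $M:=\sup_{x\in\R}\varphi(x)<\infty$, one has
$$
\int_{-\delta}^{\delta}\varphi(x)\,dx\le 2M\delta,
$$
so it suffices to take $\delta<\eps/(4M)$; this $\delta$ depends on $n$ only (through $M$) and on the fixed $\eps$, but not on $Q$. Here one uses that $D(G)$ is the discriminant of a polynomial whose coefficients have an absolutely continuous law, so $\P(D(G)=0)=0$ and $\varphi$ is genuinely a bounded density with no atom at $0$.

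Next I would apply Theorem~\ref{147} with $a=-\delta$ and $b=\delta$. With $C_n$ the constant from \eqref{1133} this gives
$$
\P\!\left(\Big|\frac{D(G_Q)}{Q^{2n-2}}\Big|\le\delta\right)\le\int_{-\delta}^{\delta}\varphi(x)\,dx+\frac{C_n}{\log Q}\le\frac{\eps}{2}+\frac{C_n}{\log Q}.
$$
Choosing $Q$ large enough that $C_n/\log Q<\eps/2$, i.e.\ $Q>\exp(2C_n/\eps)$, the right-hand side is below $\eps$. Passing to complements and scaling the event by $Q^{2n-2}$ then yields
$$
\P\!\left(|D(G_Q)|>\delta Q^{2n-2}\right)=1-\P\!\left(|D(G_Q)|\le\delta Q^{2n-2}\right)>1-\eps,
$$
which is exactly \eqref{204}.

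The argument is immediate once Theorem~\ref{147} is available, so there is no real obstacle at this stage: the entire difficulty of the corollary is already absorbed into the limit theorem. The only point that requires a moment's care is the qualitative input that the limiting distribution has no atom at the origin — without it one could not push the neighbourhood mass below $\eps/2$ — but this is guaranteed by the stated boundedness of $\varphi$.
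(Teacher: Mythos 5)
Your argument is correct and is essentially identical to the paper's own one-line proof: both use the boundedness of the density $\varphi$ to make $\int_{-\delta}^{\delta}\varphi(x)\,dx\le 2\delta\sup_x\varphi(x)$ small, then invoke Theorem~\ref{147} on the interval $[-\delta,\delta]$ and absorb the $1/\log Q$ error by taking $Q$ large. You have merely written out explicitly the choice of $\delta$ and $Q$ that the paper leaves implicit.
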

\begin{proof}
Since $\sup_{x\in\R}\varphi(x)<\infty$, it follows from \eqref{1133} that
$$
\P(|D(G_Q)|<\delta Q^{2n-2})\ll \delta+\frac{1}{\log Q},
$$
which completes the proof.
\end{proof}

As another corollary we obtain an estimate for $\Delta(G_Q)$.
\begin{corollary}\label{303}
For any $\varepsilon>0$ there exists $\delta>0$ (depending on $n$ only) such that for all sufficiently large $Q$
\begin{equation}\label{205}
\P(\delta<\Delta(G_Q)<\delta^{-1})>1-\varepsilon.
\end{equation}
\end{corollary}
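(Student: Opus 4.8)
The plan is to prove the two inequalities in \eqref{205} by separate arguments, each valid on an event of probability exceeding $1-\varepsilon/2$, and then to intersect these events. For the lower bound I would invoke Mahler's inequality \eqref{331} together with Corollary~\ref{148}. Since each coefficient satisfies $|\xi_{Q,i}|\le Q$, the denominator in \eqref{331} obeys $(|\xi_{Q,n}|+\dots+|\xi_{Q,0}|)^{n-1}\le\bigl((n+1)Q\bigr)^{n-1}$. By Corollary~\ref{148}, for a suitable $\delta_1>0$ the event $A=\{|D(G_Q)|>\delta_1Q^{2n-2}\}$ has probability exceeding $1-\varepsilon/2$ once $Q$ is large; note that on $A$ we have $D(G_Q)\neq0$, so $G_Q$ has no multiple zeros (and $\xi_{Q,n}\neq0$, i.e.\ $\deg G_Q=n$) by \eqref{308}, whence \eqref{331} applies. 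On $A$ the numerator satisfies $|D(G_Q)|^{1/2}>\delta_1^{1/2}Q^{n-1}$, the powers of $Q$ cancel, and we are left with
$$
\Delta(G_Q)>\sqrt3\,n^{-(n+2)/2}(n+1)^{-(n-1)}\delta_1^{1/2}=:\delta_2>0,
$$
a constant independent of $Q$.

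The upper bound is the more delicate half, and the main obstacle is that the discriminant machinery yields only lower bounds for $\Delta$; to bound $\Delta$ from above one must instead control the \emph{location} of the roots, which can escape to infinity precisely when the leading coefficient $\xi_{Q,n}$ is small. I would therefore discard that rare event: fix a small $c>0$ and put $B=\{|\xi_{Q,n}|\ge cQ\}$. Since $\xi_{Q,n}$ is uniform on $\{-Q,\dots,Q\}$, one checks that $\P(B)\to1-c$ as $Q\to\infty$, so choosing $c$ small enough (say $c<\varepsilon/2$) gives $\P(B)>1-\varepsilon/2$ for all large $Q$. On $B$ the ratios $|\xi_{Q,i}/\xi_{Q,n}|\le1/c$ are uniformly bounded, so Cauchy's classical root bound places every root of $G_Q$ in the disk $\{|z|\le R\}$ with $R=1+1/c$. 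Consequently every pairwise distance $|\alpha_i-\alpha_j|\le|\alpha_i|+|\alpha_j|\le2R$, so in particular $\Delta(G_Q)\le2R$.

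Finally I would combine the two events. On $A\cap B$, which satisfies $\P(A\cap B)\ge1-\P(A^c)-\P(B^c)>1-\varepsilon$, we have $\delta_2<\Delta(G_Q)<2R$; setting $\delta=\min\{\delta_2,(2R)^{-1}\}$ then yields \eqref{205} for all sufficiently large $Q$. The only points I expect to need care are the verification that Mahler's inequality is applicable (guaranteed by $D(G_Q)\neq0$ on $A$) and the elementary limit $\P(B)\to1-c$, both of which are routine.
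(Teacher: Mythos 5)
Your argument is correct. The lower-bound half coincides with the paper's: combine Mahler's inequality \eqref{331} with Corollary~\ref{148} and the trivial bound $|\xi_{Q,n}|+\dots+|\xi_{Q,0}|\le (n+1)Q$, so that the powers of $Q$ cancel and a constant $\delta_2>0$ remains. For the upper bound you and the paper both begin by discarding the event that the leading coefficient is small, but then diverge: the paper stays entirely inside the discriminant framework, using \eqref{308} to write $\prod_{i<j}|\alpha_i-\alpha_j|^2=|D(G_Q)|\,|\xi_{Q,n}|^{-(2n-2)}$ and bounding the numerator by \eqref{311}, which on the event $|\xi_{Q,n}|>(\varepsilon/2)Q$ yields $\Delta(G_Q)\le(2/\varepsilon)^{2/n}$; you instead invoke Cauchy's root bound to confine all roots to the disk of radius $1+1/c$ and conclude $\Delta(G_Q)\le 2(1+1/c)$ by the triangle inequality. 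Both are two-line computations; yours is more elementary (no discriminant is needed for this half), while the paper's derives the whole corollary from the discriminant estimates alone. One small inaccuracy worth fixing: $D(G_Q)\ne0$ does \emph{not} by itself force $\xi_{Q,n}\ne0$, since the determinant \eqref{932} can be nonzero for a polynomial of lower degree (for $n=2$ it equals $a_1^2-4a_2a_0$, which is $a_1^2\ne0$ when $a_2=0$ and $a_1\ne 0$), so \eqref{308} and the degree-$n$ form of \eqref{331} are not automatically applicable on the event $A$ alone. This costs you nothing, because on the intersection $A\cap B$ you do have $\xi_{Q,n}\ne0$ and hence $\deg G_Q=n$; just perform the degree check on $A\cap B$ rather than on $A$.
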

\begin{proof}
For large enough $Q$ we have
$$
\P\left(|\xi_{Q,n}|>\frac{\varepsilon}{2}Q\right)>1-\varepsilon.
$$
Therefore it follows from \eqref{308} and \eqref{311} that with probability at least $1-\varepsilon$
$$
\Delta(G_Q)\leq\left(\frac2\varepsilon\right)^{2/n},
$$
which implies the upper estimate. The lower bound immediately follows from \eqref{204} and \eqref{331}.

\end{proof}

{\bf Remark on irreducibility.}
In order to consider $\Delta(G_Q)$ as distance between the  closest conjugate algebraic numbers of $G_Q$ we have to restrict ourselves to irreducible polynomials only. In other words  the distribution of the random polynomial $G_Q$ has to be conditioned on $G_Q$ being irreducible. It turns out that the  relations \eqref{204} and \eqref{205} with conditional versions of the left-hand sides still hold. This fact easily follows from the estimate
$$
\P(G_Q \text{is irreducible})\asymp1,
$$
which was obtained by van der Waerden~\cite{vW36}.

\section{Proof of Theorem~2.1}\label{1723}

For  $k\in\mathbb N$ the moments of $\xi_i$ and $\xi_{i,Q}$ are given by 
$$
\E\xi^{2k}_i=\frac1{2k+1},\quad \E\xi^{2k}_{i,Q}=\frac{2}{2Q+1}\sum_{j=1}^Qj^{2k}.
$$
Since
$$
\frac{Q^{2k+1}}{2k+1}=\int_0^Qt^{2k}dt\leqslant \sum_{j=1}^Qj^{2k}\leqslant\int_0^Q(t+1)^{2k}\,dt\leqslant\frac{(Q+1)^{2k+1}}{2k+1},
$$
we get
\begin{multline*}
\left|\frac{2}{2Q+1}\sum_{j=1}^Qj^{2k}-\frac{Q^{2k}}{2k+1}\right|=\frac{2}{2Q+1}\Big|\sum_{j=1}^Qj^{2k}-\frac{2Q+1}{2}\frac{Q^{2k}}{2k+1}\Big|
\\\leqslant\frac{2}{2Q+1}\Big|\sum_{j=1}^Qj^{2k}-\frac{Q^{2k+1}}{2k+1}\Big|+\frac{Q^{2k}}{2Q+1}
\\\leqslant\frac{2}{2Q+1}\cdot\frac{(Q+1)^{2k+1}-Q^{2k+1}}{2k+1}+\frac{Q^{2k}}{2Q+1}\leqslant2^{2k}Q^{2k-1},
\end{multline*}
which implies
\begin{equation}\label{133}
\left|\E\left(\frac{\xi_{i,Q}}Q\right)^{2k}-\E\xi^{2k}\right|\leqslant\frac{2^{2k}}{Q}.
\end{equation}
It follows from \eqref{932} that for all $k\in\mathbb N$
\begin{equation}\label{238}
\left|\E\,D^k\left(\frac{G_Q}Q\right)-\E\,D^k(G)\right|\leq
n^{nk}\sum_{k_0,\dots,k_n}\left|\prod_{i=0}^n\E\,\left(\frac{\xi_{i,Q}}Q\right)^{2k_i} -\prod_{i=0}^n\E\,\xi_{i}^{2k_i}\right|,
\end{equation}
where the summation is taken over at most $((2n-1)!)^k$ summands such that $k_0+\dots+k_n=k(n-1)$. Let us show that
\begin{equation}\label{123}
\left|\prod_{i=0}^n\E\,\left(\frac{\xi_{i,Q}}Q\right)^{2k_i} -\prod_{i=0}^n\E\,\xi_{i}^{2k_i}\right| \leq \frac{2^{2k_0+\dots+2k_n}}{Q}.
\end{equation}
We proceed by induction on $n$. The case $n=0$ follows from \eqref{133}. It holds
\begin{multline*}
\left|\prod_{i=0}^n\E\,\left(\frac{\xi_{i,Q}}Q\right)^{2k_i} -\prod_{i=0}^n\E\,\xi_{i}^{2k_i}\right|
\\\leq\left|\prod_{i=0}^{n-1}\E\,\left(\frac{\xi_{i,Q}}Q\right)^{2k_i} -\prod_{i=0}^{n-1}\E\,\xi_{i}^{2k_i}\right|\E\,\left(\frac{\xi_{n,Q}}Q\right)^{2k_n}
\\+\prod_{i=0}^{n-1}\E\,\xi_{i}^{2k_i} \left|\E\,\left(\frac{\xi_{n,Q}}Q\right)^{2k_n}-\E\,\xi_{0}^{2k_0}\right|.
\end{multline*}
Applying the induction assumption and \eqref{133}, we obtain \eqref{123}.

Thus, using \eqref{238}, \eqref{123}, and the relation $k_0+\dots+k_n=k(n-1)$ we get
\begin{equation}\label{430}
\left|\E\,D^k\left(\frac{G_Q}Q\right)-\E\,D^k(G)\right|\leq\frac{\gamma^k}{Q},
\end{equation}
where $\gamma$ depends on $n$ only.

Since $D(G)$ and $D(G_Q/Q)$ are bounded random variables, their characteristic functions
$$
f(t)=\E\,e^{iD(G)},\quad f_Q(t)=\E\,e^{iD(G_Q/Q)}
$$
are entire functions. Therefore  \eqref{430} implies that for all real $t$
\begin{equation}\label{743}
|f_Q(t)-f(t)|=\left|\sum_{k=1}^\infty i^k\frac{\E\,D^k(G_Q/Q)-\E\,D^k(G)}{k!}t^k\right|\leq\frac1Q\sum_{k=1}^\infty\frac{(\gamma |t|)^k}{k!}\leq\frac{\gamma |t|e^{\gamma |t|}}{Q}.
\end{equation}

Now we are ready to estimate the uniform distance between the distributions of $D(G)$ and $D(G_Q/Q)$ using the closeness of $f(t)$ and $f_Q(t)$. Let $F$ and $F_Q$ be distribution functions of $D(G)$ and $D(G_Q/Q)$. By Esseen's inequality, we get for any $T>0$
$$
\sup_{x}|F_Q(x)-F(x)|\leq\frac{2}{\pi}\int_{-T}^T\,\left|\frac{f_Q(t)-f(t)}{t}\right|\,dt+\frac{24}{\pi}\cdot\frac{\sup_{x\in\R}\varphi(x)}{T}.
$$
Applying \eqref{743}, we obtain that there exists a constant $C$ depending on $n$ only such that for any $T>0$
$$
 \sup_{-\infty\leq a\leq b\leq\infty}\left|\left(\P(a\leq D\left(\frac{G_Q}Q\right)\leq b\right)-\P(a\leq D(G)\leq b)\right|
 \leq C\left(\frac{Te^{\gamma T}}{Q}+\frac{1}{T}\right).
$$
Taking $T=\log Q/2\gamma$ completes the poof.

\section{Resultants}
Given polynomials
$$
p(x)=a_n(x-\alpha_1)\dots(x-\alpha_n),\quad q(x)=b_m(x-\beta_1)\dots(x-\beta_m),
$$
denote by $R(p,q)$ the resultant defined by
$$
R(p,q)=a_n^mb_m^n\prod_{i=1}^n\prod_{j=1}^m(\alpha_i-\beta_j).
$$
Obviously discriminants are essentially a specialization of  resultants via:
$$
D(p)=(-1)^{n(n-1)/2}a_n^{-1}R(p,p').
$$

Repeating the arguments from Section~\ref{1723} we obtain the following result. Consider the random polynomials
$$
G_Q(x)=\xi_{Q,n}x^n+\xi_{Q,n-1}x^{n-1}+\dots+\xi_{Q,1}x+\xi_{Q,0},
$$
$$
F_Q(x)=\eta_{Q,m}x^m+\eta_{Q,m-1}x^{m-1}+\dots+\eta_{Q,1}x+\eta_{Q,0}
$$
with independent coefficients uniformly distributed on $2Q+1$ points $\{-Q,\dots,Q\}$ and consider the random polynomials
$$
G(x)=\xi_nx^n+\xi_{n-1}x^{n-1}+\dots+\xi_1x+\xi_0,
$$
$$
F(x)=\eta_mx^m+\eta_{m-1}x^{m-1}+\dots+\eta_1x+\eta_0
$$
with independent coefficients uniformly distributed on $[-1,1]$. Denote by $\psi$ \tc{the distribution function} of $R(G,F)$. We have
\begin{equation*}
\sup_{-\infty\leq a\leq b\leq\infty}\left|\P\left(a\leq \frac{R(G_Q,F_Q)}{Q^{m+n}}\leq b\right)-\int_a^b\psi(x)\, dx\right|\ll\frac1{\log Q}.
\end{equation*}

\bigskip

\bigskip

{\bf Acknowledgments.} We are grateful to Victor Beresnevich, Vasili Bernik, and Zakhar Kabluchko for  \tc{useful} discussions. We also would like to thank Andrei Zaitsev for some remarks on notations.

\bibliographystyle{plain}
\bibliography{bib}

\end{document}